\title{The classification of countable models of set theory}
\author{John Clemens}
\address{John Clemens, Boise State University, 1910 University Dr, Boise, ID 83725}
\email{johnclemens@boisestate.edu}
\author{Samuel Coskey}
\address{Samuel Coskey, Boise State University, 1910 University Dr, Boise, ID 83725}
\email{scoskey@gmail.com}
\author{Samuel Dworetzky}
\address{Samuel Dworetzky, University of Denver, 2390 S York St, Denver, CO 80208}
\email{sam.dworetzky@du.edu}
\subjclass[2010]{03E15, 03C62}
\newtheorem{thm}{Theorem}[section]
\newtheorem{lem}[thm]{Lemma}
\newtheorem{prop}[thm]{Proposition}
\theoremstyle{definition}
\newtheorem{defn}[thm]{Definition}
\theoremstyle{remark}
\renewcommand{\setminus}{\smallsetminus}
\newcommand{\WO}{\ensuremath{\mathrm{WO}}}
\newcommand{\PA}{\ensuremath{\mathrm{PA}}}
\newcommand{\ZF}{\ensuremath{\mathrm{ZF}}}
\newcommand{\ZFC}{\ensuremath{\mathrm{ZFC}}}
\newcommand{\ZFGC}{\ensuremath{\mathrm{ZFGC}}}
\newcommand{\WFT}{\ensuremath{\mathrm{WFT}}}
\newcommand{\Diag}{\ensuremath{\mathrm{Diag}}}
\DeclareMathOperator{\gap}{\mathop{\mathrm{gap}}}
\DeclareMathOperator{\Th}{\mathop{\mathrm{Th}}}
\makeatletter\pretocmd{\@seccntformat}{\S}{}{}
  \pretocmd{\@subseccntformat}{\S}{}{}\makeatother
\begin{document}
\begin{abstract}
  We study the complexity of the classification problem for countable models of set theory (\ZFC). We prove that the classification of arbitrary countable models of \ZFC\ is Borel complete, meaning that it is as complex as it can conceivably be. We then give partial results concerning the classification of countable well-founded models of \ZFC.
\end{abstract}
\maketitle

\section{Introduction}

In set theory we have a number of fundamental methods to construct models of \ZFC: ultrapower constructions, forcing constructions, model-theoretic constructions using compactness, and so on. With such powerful and versatile methods of building models, it is natural to expect that the classification of models of \ZFC\ is a very complex problem. In this article we examine the classification problem for countable models of \ZFC\ from the point of view of Borel complexity theory, which we will describe shortly.

Our first result will be to confirm the above intuition and show that, assuming \ZFC\ has any models, the classification of countable models of \ZFC\ is ``Borel complete''. This level of complexity will be defined below, but for the moment we note that it is the maximum conceivable complexity for this problem. Stronger, we will show that for any consistent theory $T$ extending \ZFC, the classification of countable models of $T$ is Borel complete.

The proof of this fact will make use of the close analogy between models of \ZFC\ and models of \PA, together with the fact that the analogous result has already been established for countable models of \PA\ in \cite{coskey-kossak}. In that article, the authors used a construction due to Gaifman called a ``canonical $I$-model'' to establish that for any completion $T$ of \PA, the classification of countable models of $T$ is Borel complete. In the present article, we will show how Gaifman's construction may be used to build models of \ZFC, and how the argument of \cite{coskey-kossak} thus gives the desired conclusion for models of \ZFC.
 

Of course, Gaifman's construction produces \emph{nonstandard} (meaning ill-founded) models of \PA. Our modified construction produces nonstandard models of \ZFC\ as well. Thus it is natural to ask what is the complexity of the classification of countable \emph{standard} (meaning well-founded) models of \ZFC. Here the answer must be somewhat more subtle than before since, for instance, the complexity of countable standard models of $T$ will depend on the particular completion $T$ of \ZFC\ that one studies. Even the number of countable standard models depends on $T$. In fact, Enayat has shown in \cite{enayat-counting} that the number of countable standard models of $T$ up to isomorphism may be any cardinal $\leq\aleph_1$ or continuum.

While we do not identify the precise complexity of the classification of countable standard models, we will provide several partial results on the subject. For instance, we show that the complexity of the classification of standard models of \ZFC\ lies somewhat below the level of a Borel complete classification problem. Additionally, for several particular completions $T$ of \ZFC, we identify bounds on the complexity of the classification of countable standard models of $T$.

In order to discuss these results formally, we will need to describe the Borel complexity theory of classification problems. First, if $\mathcal L$ is any countable first-order relational language then we may form the standard Borel space of all countable $\mathcal L$-structures:
\[X_{\mathcal L}=\prod_{R\in\mathcal{L}}2^{{\omega}^{{a(R)}}}\,,
\]
where $a(R)$ denotes the arity of the logical symbol $R$. If $T$ is any $\mathcal L$-theory we study the Borel subset consisting of just the models of $T$:
\[X_T=\lbrace M\in X_{\mathcal L}\mid M\models T\rbrace .
\]
We then identify the classification problem for countable models of $T$ with the isomorphism equivalence relation $\cong_{T}$ on $X_{T}$.

In order to compare the complexity of two classification problems, we use the notion of Borel reducibility. Generally, if $X,Y$ are standard Borel spaces and $E,F$ are equivalence relations on $X,Y$ respectively, then we say $E$ is \emph{Borel reducible} to $F$ (denoted $E\leq_{B}F$) if there is a Borel function $f\colon X\to Y$ such that
\[x\mathrel{E}x' \iff f(x)\mathrel{F}f(x')\,.
\]
Intuitively, if $E$ is Borel reducible to $F$, then we say that the classification problem for elements of $Y$ up to $F$-equivalence is at least as complex as the classification problem for elements of $X$ up to $E$-equivalence.

The study of Borel reducibility has provided a series of benchmark equivalence relations with which to compare a given classification problem. One of the simplest equivalence relations is the equality relation $=$ on $2^\omega$. By the Silver dichotomy, $=$ is the minimum among all Borel equivalence relations with uncountably many equivalence classes. Just above $=$ is the almost equality relation $E_0$ on $2^\omega$ defined by $x\mathrel{E}_0x'$ iff $x(n)=x'(n)$ for all but finitely many $n$. By the Glimm--Effros dichotomy \cite{hkl}, any Borel equivalence relation is either Borel reducible to $=$ or else $E_0$ is Borel reducible to it.

At the higher end of the complexity spectrum, there is a maximum possible complexity among isomorphism classification problems for classes of countable structures. First, we say that an equivalence relation $E$ is \emph{classifiable by countable structures} if $E$ is Borel reducible to $\cong_T$ for some theory $T$. Then, we say that $E$ is \emph{Borel complete} if for any first-order theory $T$ there is a Borel reduction from $\cong_{T}$ to $E$. Some well-known examples of Borel complete classifications include the isomorphism equivalence relations on countable connected graphs and on countable linear orders.

In the next section, we review the argument that the isomorphism relation on the class of countable models of \PA\ is Borel complete. We then show how to modify the details of the argument to show that the isomorphism relation on countable models of \ZFC\ is Borel complete too. In the third section, we study just the standard models of \ZFC. We show in an appropriate sense that the classification of countable standard models is strictly lower than Borel complete. We also show that if $T$ is a theory of Cohen forcing models, then the classification of countable standard models of $T$ lies at or above $E_0$ in complexity. Finally, we show that under a mild hypothesis, if $T$ is the theory of $L_{\omega_1}$ then the classification of countable models of $T$ is not Borel reducible to a Borel equivalence relation.

\textbf{Acknowledgement.} This work represents a portion of the third author's master's thesis \cite{sam-thesis}. The thesis was written at Boise State University under the supervision of the second author, with significant input from the first author. We would like to thank Ali Enayat and Iian Smythe for a number of helpful communications on the subject.

\section{Arbitrary models of \ZFC}

In this section we show that the classification of countable models of \ZFC\ is Borel complete by adapting the proof of the \PA\ case from \cite{coskey-kossak}. We begin by reviewing the key elements of the proof of the \PA\ version.

As we have said, the argument relies on the details of a construction due to Gaifman called a canonical $I$-model of \PA. The construction begins with the following definition. Let $M\models\PA$ and let $p(v)$ be a type (of arity $1$) over $M$. Then $p$ is said to be \emph{minimal} if it is:
\begin{itemize}
  \item unbounded: for all $a\in M$ we have $(a<v)\in p(v)$; and
  \item indiscernible: if $M\prec N$ and $a_{1}<\cdots<a_{n}$ and $b_{1}<\cdots<b_{n}$ are two sequences of realizations of $p(v)$ in $N$, then $N(\overline{a})\equiv N(\overline{b})$.
\end{itemize}
Gaifman showed that every model $M\models\PA$ admits a minimal type.

Next if $M\models\PA$ and $I$ is a given linear ordering, the \emph{canonical $I$-model} $M(I)$, constructed with respect to some fixed minimal type $p$ over $M$, is generated by $M$ together with an $I$-ordered sequence of realizations of $p$. Canonical $I$-models have many useful properties, but for our purposes it is enough to know the following two facts:
\begin{enumerate}
  \item The realizations of $p$ in $M(I)$ form a sequence of order indiscernibles; and
  \item The ordertype $I$ can be recovered from any isomorphic copy of $M(I)$. (Specifically $I$ will be the ordertype of the set of \emph{gaps} in $M(I)$, which we will define and see below.)
\end{enumerate}
It follows from property~(a) that $I\cong I'$ implies $M(I)\cong M(I')$, and from property~(b) that $M(I)\cong M(I')$ implies $I\cong I'$. Thus Coskey--Kossak were able to conclude that there exists a Borel reduction from the class of countable linear orders to the class of countable models of \PA\ which carries $I\mapsto M(I)$.

We now aim to adapt this construction to the case of models of set theory. We begin with the appropriate analog of the notion of a minimal type. First, if $M\models\ZFC$ and $p(v)$ is a type with parameters from $M$, we will say $p(v)$ is an \emph{$\omega$-type} over $M$ if $(v<\omega)\in p(v)$.

\begin{defn}
  Let $M\models\ZFC$ and let $p(v)$ be an $\omega$-type over $M$. We say that $p(v)$ is \emph{$\omega$-minimal} if it is:
  \begin{itemize}
  \item $\omega$-unbounded: for all $\alpha\in\omega^M$ we have $(\alpha<v)\in p(v)$; and
  \item indiscernible: if $M\prec N$ and $\alpha_{0}<\cdots<\alpha_{n}$ and $\beta_{0}<\cdots<\beta_{n}$ are two sequences of realizations of $p(v)$ in $N$ then $N(\bar\alpha)\equiv N(\bar\beta)$.
  \end{itemize}
\end{defn}

The following is the analog of Gaifman's theorem, and the proof is a straightforward adaptation of the classical version. Here we summarize \cite[Thoerem~3.1.2]{kossak-schmerl}; alternatively see \cite{gaifman} or \cite[Proposition~11.4]{wong}.

\begin{prop}
  For any $M\models\ZFC$, $M$ admits an $\omega$-minimal type.
\end{prop}

\begin{proof}
  Let $\varphi_i(\bar x)$ be an enumeration of the formulas. We inductively construct a sequence of formulas $\theta_i(v)$ satisfying:
  \begin{enumerate}
    \item $\theta_{i+1}(v)\to\theta_i(v)$;
    \item for all $\alpha\in\omega^M$ there exists $\beta\in\omega^M$ such that $\alpha<\beta$ and $\theta_{i+1}(\beta)$, and;
    \item $\theta_i(v)$ ``settles'' $\varphi_i$ in the sense that $M$ satisfies for all increasing $\bar x\in\omega$ we have $\bigwedge_j\theta_i(x_j)\to\varphi_i(\bar x)$, or else for all increasing $\bar x\in\omega$ we have $\bigwedge_j\theta_i(x_j)\to\neg\varphi_i(\bar x)$.
  \end{enumerate}
  To do so we use Ramsey's theorem, as formalized in \ZFC, repeatedly. That is, given $\theta_i(v)$, regard it as an unbounded subset of $\omega^M$. The formalized Ramsey theorem then implies it has an unbounded subset, definable by some $\theta_{i+1}(v)$, which is homogeneous for the partition determined by $\varphi_i(\bar x)$.
  
  Now let $p(v)$ be the deductive closure of the $\theta_i(v)$ and $\alpha<v$ for $\alpha\in\omega^M$. Then $p(v)$ is consistent and $\omega$-unbounded by (a),(b), and it is not difficult to confirm that $p(v)$ is indiscernible by (c). Thus $p(v)$ is $\omega$-minimal.
\end{proof}

We will also need the fact that $\omega$-minimal types are \emph{$\omega$-strongly definable}: for every formula $\varphi(v,z)\in\mathcal{L}_{M}$ there exists a formula $\theta(v)\in p(v)$ such that
\[M\models \forall z\in\omega \left[
    \forall^{\infty}v\in\omega(\theta(v)\rightarrow\varphi(v,z))
    \vee
    \forall^{\infty}v\in\omega(\theta(v)\rightarrow\neg\varphi(v,z))
  \right]
\]
Here, $\forall^\infty v\in\omega$ means ``for all $v$ outside a bounded subset of $\omega$.'' Once again, the proof is a straightforward adaptation of the classical version, we follow Exercise~3.6.5 and Lemma~3.1.13 of \cite{kossak-schmerl}.

\begin{prop}
  If $M\models\ZFC$ and $p(v)$ is an $\omega$-minimal type over $M$, then $p(v)$ is $\omega$-strongly definable.
\end{prop}

\begin{proof}
  Let $\varphi(v,z)$ be given. By indiscernibility, we can find a formula $\theta(v)\in p(v)$ such that
  \[M\models\forall x\forall y\forall v\,[
    (\theta(x)\wedge\theta(y)\wedge\theta(v)\wedge x<y<v)
    \to
    (\forall z\leq x(\varphi(y,z)\leftrightarrow\phi(v,z)))
  ]
  \]
  Now given $z\in\omega^M$, suppose that $M\models\neg\forall^\infty v\in\omega(\theta(v)\to\phi(v,z))$. Then we can find $x,y\in\omega^M$ such that $z\leq x\leq y$, and $\theta(x)$, $\theta(y)$, $\neg\phi(y,z)$ are true in $M$. By the choice of $\theta$, if $v\in\omega^M$ is such that $y<v$ and $\theta(v)$ is true in $M$, then $\neg\phi(v,z)$ is true in $M$ too. Thus we have $M\models\forall^\infty v\in\omega(\theta(v)\to\neg\phi(v,z))$, as desired.
\end{proof}

%
%


In order to construct the models $M(I)$, we will assume $M$ is a model of \ZFGC, that is, \ZF\ together with the global choice axiom. This means $M$ is a structure in the expanded language with an additional function symbol $F$, and $F$ is interpreted as a function with the property that for all nonempty $x\in M$ we have $M\models F(x)\in x$. The global choice axiom helps us mimic the \PA\ arguments because the theory \ZFGC\ has built-in Skolem functions.

The next definition, which we promised earlier, is the key to recovering the order type of $I$ from the isomorphism type of $M(I)$.

\begin{defn}
  Let $M\models\ZFGC$, and let $\beta\in\omega^M$. We define the following sets:
  \begin{itemize}
  \item Let $M_{\omega}(\beta) = \lbrace \alpha\in \omega^{M}:~\text{for some Skolem function $t$},~t(\beta)\in \omega^{M}~\wedge~\alpha<t(\beta)\rbrace$.
  \item Let $M_{\omega}[\beta] = \lbrace \alpha\in \omega^{M}:~\text{for any Skolem function $t$},~\text{if}~~t(\alpha)\in \omega^{M}~\text{then}~t(\alpha)<\beta \rbrace$.
  \end{itemize}
  We then define the \emph{$\omega$-gap} of $\beta$ as $\gap_{\omega}(\beta)=M_{\omega}(\beta)\setminus M_{\omega}[\beta]$.
\end{defn}

The following result shows how minimal types and gaps are related; see also \cite[Lemma~3.1.18]{kossak-schmerl}.

\begin{prop}
  \label{prop:rare}
  Let $M\models\ZFGC$ and let $p(v)$ be an $\omega$-minimal type over $M$. Then $p(v)$ is \emph{rare}, which means that if $M\prec N$ and $\alpha,\beta\in\omega^N$ are distinct witnesses of $p(v)$, then $\alpha$ and $\beta$ lie in distinct $\omega$-gaps.
\end{prop}

\begin{proof}
  Assume $\alpha<\beta$, and let $N'$ be an elementary extension of $N$ with some $\gamma\in\omega^{N'}$ such that $\beta$ lies below $\gap_\omega(\gamma)$. Then for any Skolem function $t$, we have $t(\beta)<\gamma$. Since $p(v)$ is indiscernible, we have $t(\alpha)<\beta$ too. Thus $\alpha,\beta$ lie in distinct $\omega$-gaps.
\end{proof}

Before we construct the models $M(I)$ along a linear order $I$, we first consider the case of adjoining a single new witness for $p$ to a model $M$. Let $M\models\ZFGC$ and let $p(v)$ be an $\omega$-minimal type over $M$. In the following result we will let $M(\{\gamma\})$ denote the elementary extension of $M$ obtained by adjoining a single witness $\gamma$ for $p(v)$. That is, $M(\{\gamma\})$ is the prime model of the elementary diagram of $M$ together with the sentences $p(\gamma)$. The prime model exists thanks to the built-in Skolem functions of $\ZFGC$.

The following proposition is a straightforward analog of \cite[Proposition~4.8]{gaifman}; we follow the proof in \cite[Proposition~10.4]{wong}.
  
\begin{lem}
  \label{lem:gaifman1step}
  Let $M\models\ZFGC$, $p(v)$ be an $\omega$-minimal type over $M$, and let $M(\{\gamma\})$ be as above. Then, $\omega^{M(\{\gamma\})}=\omega^{M}\cup\gap_{\omega}(\gamma)$.
\end{lem}

\begin{proof}
   Let $\beta\in\omega^{M(\{\gamma\})}$, and write $\beta=t(\gamma)$ for some Skolem term $t$. By $\omega$-strong definability, we can find $\theta(v)\in p(v)$ such that
  \[M\models\forall z\in\omega \left[
    (\forall^\infty v\in\omega(\theta(v)\rightarrow z=t(v)))
    \vee
    (\forall^\infty v\in\omega(\theta(v)\rightarrow z\neq t(v)))
  \right]
  \]
  First suppose $M$ satisfies $\forall z\in\omega(\forall^\infty v\in\omega(\theta(v)\rightarrow z\neq t(v)))$. Then $M(\{\gamma\})$ satisfies the same sentence. Let $s(z)$ be the least $v$ such that $\theta(v)\rightarrow z\neq t(v)$. Since it is true in $M(\{\gamma\})$ that $\theta(\gamma)\rightarrow \beta=t(\gamma)$, we must have that $s(\beta)>\gamma$. Thus $s$ is a Skolem function witnessing that $\beta\in\gap_\omega(\gamma)$.
  
  Next suppose that $M$ satisfies $\exists z\in\omega(\forall^\infty v\in\omega(\theta(v)\rightarrow z=t(v)))$. Then we can find $m_0,m\in M$ such that $M$ satisfies $\forall v(v\geq m_0\wedge\theta(v)\rightarrow m=t(v))$. It follows that $M(\{\gamma\})$ satisfies the same sentence, and we conclude that $\beta=t(\gamma)=m\in M$, completing the proof.
\end{proof}

We remark that the lemma implies $M(\{\gamma\})$ is an \emph{$\omega$-end extension} of $M$, meaning for any $\alpha\in\omega^M$ and any $\beta\in\omega^{M(\{\gamma\})}\setminus\omega^M$ we have $\alpha<\beta$.
  
The following result describes the construction of the model $M(I)$. It also asserts the key property which will allow us to recover the ordertype of $I$ from the isomorphism type of the model $M(I)$.

\begin{thm}
  \label{thm:gaifman}
  Let $M\models\ZFGC$ and $p(v)$ be an $\omega$-minimal type over $M$. Let $(I,<)$ be a linearly ordered set. Then there is an $\omega$-end extension $M\prec N$ generated over $M$ by a set $X = \lbrace \alpha_i\mid i\in I\rbrace\subset N$ such that $\alpha_i<\alpha_j$ for all $i<j$ and $\omega^N = \omega^{M}\cup\bigcup_{i\in I}gap_{\omega}(\alpha_{i})$. 
\end{thm}
      
\begin{proof}
  We first construct the extension $N$. We form the theory
  \[T=\Diag_{el}(M)\cup\bigcup_{i\in I}p(\alpha_{i})\cup\lbrace\alpha_{i}\in\alpha_{j}\mid i<j\wedge i,j\in I\rbrace
  \]
  where each $\alpha_{i}$ is a new constant symbol. We then let $N$ be the prime model of $T$, that is, the Skolem hull of $X=\{\alpha_i\mid i\in I\}$ in any model of $T$. By the argument of Lemma~\ref{lem:gaifman1step}, we have that $N$ is an $\omega$-end extension of $M$.
  
  It remains to show that $\omega^N = \omega^{M}\cup \bigcup_{i\in I}\gap_{\omega}(\alpha_{i})$. For this, let $\beta\in\omega^N$. Since $N$ is a Skolem hull, we can find a formula $\eta$ in the language of set theory and $\alpha_{i_1},...,\alpha_{i_n}\in X$ such that $\beta=\eta(\alpha_{i_1},...,\alpha_{i_n})$. Now, let $N_0$ denote the Skolem hull of $M\cup\lbrace \alpha_{i_1},\ldots,\alpha_{i_n}\rbrace$. By Proposition~\ref{prop:rare}, the gaps of $\alpha_{i_0},\ldots\alpha_{i_n}$ are disjoint. So, we have that $\gap_{N_{0}}(\alpha_{i_1})<\cdots<\gap_{N_{0}}(\alpha_{i_n})$. Using Lemma~\ref{lem:gaifman1step} inductively, we conclude that $\omega^{N_0}= \omega^{M}\cup\bigcup_{j\leq n}\gap_{N_{0}}(\alpha_{i_j})$.

  Now, it follows that $\beta$ is an element of $\omega^{M}$ or one of the gaps $\gap_{N_{0}}(\alpha_{i_j})$ for some $j\leq n$. To finish the proof, we note that $N_{0}$ and $N$ have the same Skolem functions. So, we conclude that $\beta\in \omega^{M}$ or $\beta\in \gap_{N}(\alpha_{i})$ for some $i\in I$. Thus $\omega^N = \omega^{M}\cup\bigcup_{i\in I}\gap_{\omega}(\alpha_{i})$. 
\end{proof}

As in the \PA\ case, we will use $M(I)$ to denote elementary extension $N$ constructed in Theorem~\ref{thm:gaifman}. We now use the construction of $M(I)$ to obtain a Borel reduction from the isomorphism relation on the class of countable linear orders to the isomorphism relation on the class of countable models of set theory. In particular, this will show that the isomorphism relation on the class of countable models of set theory is Borel complete.

\begin{thm}
  \label{ZFCBC}
  Let $T$ be a consistent completion of \ZFGC. Then isomorphism relation on countable models of $T$ is Borel complete.
\end{thm}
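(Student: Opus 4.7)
The plan is to construct a Borel reduction from the isomorphism relation on countable linear orders, which is Borel complete, to the isomorphism relation $\cong_T$ on countable models of $T$. By $\Con(T)$ and L\"owenheim--Skolem, fix a countable $M\models T$; granting the analog of Gaifman's existence theorem for $\omega$-minimal types, also fix an $\omega$-minimal type $p(v)$ over $M$. For each countable linear order $I$, I would send $I$ to the $\omega$-end extension $M(I)$ produced by Theorem~\ref{GaifmanZFC} with generators $\lbrace\alpha_i:i\in I\rbrace$. Since $M(I)$ is an elementary extension of $M$ and $T$ is complete, $M(I)\models T$. Presenting $M(I)$ concretely as a term model whose universe consists of Skolem terms $t(\alpha_{i_1},\ldots,\alpha_{i_n})$ with atomic relations determined by $\Diag_{el}(M)\cup\bigcup_{i\in I}p(\alpha_i)\cup\lbrace\alpha_i\in\alpha_j:i<_I j\rbrace$, the assignment $I\mapsto M(I)$ is visibly Borel: with $M$ and $p$ fixed once and for all, every atomic relation among terms is decided by the ordertype of the indices appearing.

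For $I\cong I'\Rightarrow M(I)\cong M(I')$, an order isomorphism $\sigma:I\to I'$ lifts to an isomorphism $M(I)\to M(I')$ which fixes $M$ pointwise and sends $\alpha_i\mapsto\alpha'_{\sigma(i)}$; this is well defined and elementary because, by indiscernibility of the generators, the truth of every formula among Skolem terms in the $\alpha_i$ depends only on the ordertype of the indices. For the converse $M(I)\cong M(I')\Rightarrow I\cong I'$, Theorem~\ref{GaifmanZFC} tells us that the $\omega$-gaps of $M(I)$ lying strictly above $\omega^M$ are precisely $\lbrace\gap_\omega(\alpha_i):i\in I\rbrace$, and the induced ordering on them recovers the ordering of $I$. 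An isomorphism $M(I)\to M(I')$ preserves the $\omega$-gap structure, so once we can identify the ``above $M$'' piece invariantly, it yields the desired order isomorphism $I\cong I'$.

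The main obstacle I expect is exactly this last identification: from the abstract isomorphism type of $M(I)$ we must pick out the generator-gaps without referring to a privileged copy of $M$ sitting inside $M(I)$. Following \cite{coskey-kossak}, the expected resolution uses the rareness and $\omega$-strong definability of $p$: the generator-gaps are characterized as the $\omega$-gaps that contain a realization of (the image of) $p$ over a suitable definable initial segment of $\omega^{M(I)}$, a property intrinsic to $M(I)$. A secondary issue is to justify the existence of an $\omega$-minimal type $p$ over $M$ in the first place, but I expect this to follow from a direct adaptation of Gaifman's Ramsey-style partition argument, carried out inside $\omega^M$ and relying only on combinatorial reflection already available in $\ZFGC$.
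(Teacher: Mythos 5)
Your proposal is correct and takes essentially the same route as the paper's proof: fix a countable $M\models T$ and an $\omega$-minimal type $p$, send $I\mapsto M(I)$ via Theorem~\ref{GaifmanZFC}, check Borelness, get $I\cong I'\Rightarrow M(I)\cong M(I')$ from order indiscernibility of the generators, and recover $I$ from the $\omega$-gap structure using rareness of $p$ for the converse. The obstacle you flag in the last step --- identifying the generator gaps without a privileged copy of $M$ --- is treated in the paper at essentially the level of detail you suggest (an isomorphism preserves the gap ordering, and by rareness each nontrivial gap contains exactly one witness of $p$, so the witnesses have ordertypes $I$ and $I'$), as is the existence of $\omega$-minimal types, which the paper likewise imports from Gaifman's Ramsey-style argument without a separate proof.
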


\begin{proof}
  Any consistent completion of \ZFGC\ has a prime model $M$. We need to show that:
  \begin{enumerate}
  \item The construction of $M(I)$ is Borel;
  \item $I\cong I'\Rightarrow M(I)\cong M(I')$, and;
  \item $M(I)\cong M(I')\Rightarrow I\cong I'$.
  \end{enumerate}
  For item (a), we observe that the construction of $M(I)$ can be carried out as a Henkin construction followed by taking a Skolem hull. It is not difficult to see that both of these procedures may be carried out in a Borel fashion.

  For item (b) we note that the generating set $\{\alpha_i\}$ of $M(I)$ over $M$ is a set of order indiscernibles. It is a well-known property of order indiscernibles that order isomorphisms between sequences of order-indiscernibles extend to isomorphisms between the models they generate (see for instance \cite[Lemma~5.2.6]{Marker}).

  Finally, item (c) follows from the gap information provided in Theorem~\ref{thm:gaifman}. To begin, note that an isomorphism $\alpha\colon M(I)\cong M(I')$ induces an order-preserving isomorphism from the set of $\omega$-gaps of $M(I)$ to the set of $\omega$-gaps of $M(I')$. Since $\omega$-minimal types are rare, we know that there is just one witness for $p(v)$ in each nontrivial $\omega$-gap of $M(I)$ or $M(I')$. Since the witnesses of $p(v)$ are of ordertypes $I$ and $I'$ respectively, $\alpha$ induces an order-preserving isomorphism $I\cong I'$.

  We have thus established that there is a Borel reduction $\cong_{LO}~\leq_{B}~\cong_T$, and in particular that $\cong_T$ is Borel complete. 
\end{proof}

Since the consistency of \ZFC\ implies the consistency of \ZFGC, it is a consequence of the theorem that if \ZFC\ is consistent then the classification of all models of \ZFC\ is Borel complete. Of course, it is natural to ask whether Theorem~\ref{ZFCBC} holds for an arbitrary completion $T$ of \ZFC\ which does not necessarily entail global choice. We can certainly say that there are other hypotheses on $T$ which will suffice. For example if $T$ has a prime model or a model with just a finite number of $\omega$-gaps, then the above proof will go through with minor modifications.




\section{Well-founded models of \ZFC}

In this section we study the classification of well-founded models of \ZFC. If $T$ is a completion of \ZFC\ and $T$ possesses well-founded models, we let $\WFT$ denote the set of codes for well-founded models of $T$, and $\cong_\WFT$ denote the isomorphism relation restricted to $\WFT$.

We remark that $\WFT$ is not a Borel subset of the space of countable models of $T$, and so we must be careful how we study $\cong_\WFT$ with respect to Borel reducibility. While the domain of a Borel reduction function should always be a standard Borel space, the range may be contained in any subset such as $\WFT$. This means it still makes good sense to ask questions about lower bounds. For instance we can ask whether $\cong_\WFT$ is Borel complete in the sense that some Borel complete equivalence relation is Borel reducible to it. On the other hand, in order to ask questions about upper bounds it is usual to use a somewhat broader class of reduction functions than just the Borel reductions. We will use the absolutely $\bm{\Delta}^1_2$ functions, described below.

Our first result establishes that the classification of well-founded countable models of set theory is properly less complex than the classification of arbitrary countable models.

\begin{prop}
  If $T$ is any completion of \ZFC, then $\cong_\WFT$ is not Borel complete.
\end{prop}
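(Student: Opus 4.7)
The plan is to derive a contradiction from the assumption that some Borel complete equivalence relation Borel reduces to $\cong_\WFT$. Since the isomorphism relation $\cong_{\LO}$ on countable linear orders is Borel complete, it suffices to show that no Borel reduction $f\colon X_{\LO}\to X_{\mathcal L}$ with $f(X_{\LO})\subseteq\WFT$ can exist. I therefore suppose such an $f$ is given and aim to derive a contradiction.

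The first step is to apply the classical $\Sigma^1_1$-boundedness principle for well-founded relations. The image $f(X_{\LO})$ is an analytic subset of the $\Pi^1_1$-complete set $\mathrm{WF}$ of well-founded structures on $\omega$, so by boundedness there is a countable ordinal $\gamma$ with $\|f(L)\|\leq\gamma$ for every $L\in X_{\LO}$. Thus $f$ actually maps into the set $B_\gamma\subseteq X_{\mathcal L}$ of structures that are well-founded of rank at most $\gamma$. For this fixed countable $\gamma$, $B_\gamma$ is a Borel subset of $X_{\mathcal L}$, since the rank function on well-founded relations is computable by a transfinite recursion of length $\gamma+1$, with each stage Borel in the previous ones.

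The second step is to argue that the restriction of the isomorphism relation to $B_\gamma$ is smooth. Each $M\in B_\gamma$ is well-founded and extensional (as a model of $\ZFC$), so it has a unique Mostowski collapse $\pi(M)$, which is a countable transitive set of rank at most $\gamma$. By induction on $\beta\leq\gamma$ I would assign codes $c(M)\in 2^\omega$ to transitive sets of each rank, propagating through the induction so that the map $M\mapsto c(M)$ is Borel on $B_\gamma$. Since two well-founded extensional structures are isomorphic if and only if their Mostowski collapses are equal, this yields $M\cong M'\iff c(M)=c(M')$ on $B_\gamma$, witnessing smoothness.

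Composing then produces a Borel reduction $c\circ f$ from $\cong_{\LO}$ to equality on $2^\omega$, contradicting the fact that $\cong_{\LO}$ is not smooth (since $E_0\leq_B\cong_{\LO}$ by Borel completeness of $\cong_{\LO}$, while $E_0$ is not smooth). The main obstacle is arranging the Mostowski-collapse coding so that $c$ is honestly Borel and the recursion is uniform; this requires fixing a Borel enumeration of codes for countable transitive sets of each rank $\beta\leq\gamma$, which is possible because $\gamma$ itself is a fixed countable ordinal and each stage of the recursion depends only on the previously assigned Borel data.
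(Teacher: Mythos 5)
Your opening move (restricting attention to a Borel complete relation such as $\cong_{\LO}$, and applying $\bm{\Sigma}^1_1$-boundedness to the analytic image inside the well-founded part to obtain a countable bound $\gamma$ on the ranks) is exactly right, and it is the same first step as in the paper. The gap is your second step: the claim that isomorphism restricted to $B_\gamma$ is smooth is false, and the inductive Mostowski-collapse coding cannot be carried out. The failure occurs at every successor stage of your recursion: to code a transitive set of rank $\beta+1$ you must assign, injectively and in a Borel fashion, a single real to the countable \emph{set} of codes of its elements. That is precisely a Borel reduction of equality of countable sets of reals (the Friedman--Stanley jump $=^+$) to equality on $2^\omega$, which does not exist because $=^+$ is not smooth. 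Your proposed remedy, ``fixing a Borel enumeration of codes for countable transitive sets of each rank $\beta\leq\gamma$,'' begs the question: there are continuum many countable transitive sets of rank at most $\gamma$ in the relevant range, so no enumeration exists, and a Borel selection of codes for them is exactly the smoothness assertion you are trying to prove. Moreover, the claim is not merely unproven but genuinely false for the structures at hand: the paper's own theorem on Cohen models shows that if $M$ is a countable well-founded model of \ZFC, $g$ is Cohen generic over $M$, and $T=\Th(M[g])$, then $E_0$ Borel reduces to $\cong_\WFT$, and the reduction takes values among models of the form $M[g']$, all of which have the same rank $o(M)$. So for this completion $T$ the isomorphism relation on well-founded models of $T$ of rank at most $o(M)$ already lies above $E_0$. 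Since the proposition quantifies over all completions $T$, your argument cannot be repaired by any special feature of models of $T$.

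The repair is to weaken ``smooth'' to ``Borel,'' which is what the paper does: isomorphism on the bounded-rank part is Borel reduced to isomorphism of well-founded trees of a fixed countable rank (by coding each model by its membership tree), and the latter relation is Borel by the results of \cite{friedman}. The contradiction is then derived from a stronger fact than non-smoothness: if a Borel complete relation $\cong$ were Borel reducible via $f$ to a Borel equivalence relation $F$, then $\cong$ would itself be Borel, being the preimage of $F$ under $(x,y)\mapsto(f(x),f(y))$; but by \cite{friedman} no Borel complete equivalence relation is Borel. Your final step, contradicting the non-smoothness of $\cong_{\LO}$, would have sufficed had smoothness of $B_\gamma$ held; since only Borelness is available, you need this stronger fact about Borel complete relations instead.
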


\begin{proof}
  We first note that the set $\WFT$ of well-founded countable models of $T$ is a $\bm{\Pi}^1_1$ set, with rank function inherited from the usual rank function for well-founded binary relations. In fact, the rank function is simply $M\mapsto o(M)$, the ordertype of the ordinals of $M$.

  Now suppose towards a contradiction that $\cong_\WFT$ is Borel complete. Then there is, for instance, a Borel reduction $f$ from the isomorphism relation $\cong$ on the set $2^{\omega\times\omega}$ of all countable binary relations to $\cong_\WFT$. The range $f(X)$ is a $\bm{\Sigma}^1_1$ subset of $\WFT$. By the boundedness theorem \cite[Theorem~31.2]{kechris}, it follows that the rank function restricted to $f(X)$ is bounded by some ordinal $\alpha$.

  The set $\WFT_\alpha$ of models of $T$ of rank bounded by $\alpha$ is a Borel set, and we claim the isomorphism relation on $\WFT_\alpha$ is Borel reducible to the isomorphism relation on codes for countable well-founded trees of rank $\alpha$. For this, given an element $M\in\WFT_\alpha$ we can produce in a Borel way a code for a tree $T_M$ which represents the model $M$ in a standard way. Thus the root node of $T_M$ represents $M$ itself, the children of the root represent the elements of $M$, and so on, and all leaves of $T_M$ represent the empty set. The tree $T_M$ has the same ordinal rank as that of $M$. Moreover, models $M$ and $M'$ are isomorphic if and only if the codes for the corresponding trees $T_M$ and $T_{M'}$ are isomorphic. This establishes the claim.
  
  Now it is well-known that the isomorphism relation on well-founded trees of any fixed countable rank is Borel (these equivalence relations are studied in \cite{friedman}). It follows from the claim that the isomorphism relation on $\WFT_\alpha$ is Borel as well. Thus we conclude that the Borel complete equivalence relation $\cong$ is Borel reducible to a Borel equivalence relation. But this contradicts the well-known fact from \cite{friedman} that any Borel complete equivalence relation is not itself Borel.
\end{proof}

In the article \cite{enayat-counting}, the author shows that the number of isomorphism equivalence classes in $\WFT$ can have several values, such as $0$, finite, countable, $\aleph_1$, and continuum. In the rest of this section we consider the question of what is the Borel complexity of $\cong_\WFT$ for several special theories $T$.

Recall that $E_{0}$ denotes the equivalence relation defined on $2^{\omega}$ by $x\mathrel{E_0}x'$ if and only if $x(n)=x'(n)$ for all but finitely many $n$. As stated in the introduction, the Glimm-Effros dichotomy states that for any Borel equivalence relation $E$, either $E$ is smooth (Borel reducible to $=$) or else $E_0$ is Borel reducible to $E$.

\begin{thm}
  Assume $M$ is a countable well-founded model of \ZFC, let $g$ be Cohen generic over $M$, and let $T=\Th(M[g])$. Then there is a Borel reduction from $E_0$ to $\cong_\WFT$.
\end{thm}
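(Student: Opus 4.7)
The plan is to reduce $E_0$ first to an auxiliary countable Borel equivalence relation $\sim$ on the space of Cohen generics over $M$, and then compose with the natural Borel coding of the Cohen extension as a model.

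Let $\mathcal{G}\subseteq 2^\omega$ be the Polish $G_\delta$ set of reals Cohen generic over $M$. For each $g\in\mathcal{G}$, the extension $M[g]$ is a countable transitive model of $T$ by the homogeneity of Cohen forcing. Enumerating the Cohen names of $M$ as $(\tau_n)_{n<\omega}$, I would define $\Phi\colon\mathcal{G}\to X_T$ by $n\mathrel{\in_{\Phi(g)}}m$ iff some condition in $g$ forces $\tau_n\in\tau_m$. By the forcing theorem $\Phi$ is Borel, and its image lies in $\WFT$. Declare $g\sim g'$ iff $M[g]=M[g']$; since isomorphism of countable transitive models is equality, $\Phi$ is a Borel reduction of $\sim$ to $\cong_\WFT$. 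The relation $\sim$ is Borel (a countable union of name-equality conditions) and has countable classes (each contained in the countable set $M[g]\cap 2^\omega$).

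The heart of the argument is to show that $\sim$ is non-smooth. For any $z\in M\cap 2^\omega$, translation $g\mapsto g\oplus z$ is an $M$-automorphism of the Cohen forcing $\mathrm{Fn}(\omega,2)$, so it preserves $\mathcal{G}$ and yields $M[g\oplus z]=M[g]$. Hence every $\sim$-class contains a full coset of the countable dense subgroup $M\cap 2^\omega\leq(2^\omega,\oplus)$. Because this subgroup contains the finite-support subgroup, the XOR action of $M\cap 2^\omega$ on $(2^\omega,\mu)$, with $\mu$ the uniform product measure, inherits ergodicity from the classical ergodicity of the finite-support XOR action. A Borel transversal $T$ for $\sim$ would yield a Borel $M\cap 2^\omega$-invariant selector $\mathcal{G}\to T$, which by ergodicity must be $\mu$-almost everywhere constant; but this contradicts the fact that each $\sim$-class is countable and therefore $\mu$-null.

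Finally, by the Glimm--Effros dichotomy of Harrington--Kechris--Louveau applied to the non-smooth Borel equivalence relation $\sim$, there is a Borel map $h\colon 2^\omega\to\mathcal{G}$ witnessing $E_0\leq_B\sim$. Then $\Phi\circ h\colon 2^\omega\to\WFT$ is the desired Borel reduction of $E_0$ to $\cong_\WFT$. The principal obstacle is the non-smoothness of $\sim$, which the ergodicity argument addresses by exploiting the containment of the dense subgroup XOR-action inside the model-equality relation on Cohen generics.
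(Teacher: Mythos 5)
Your overall architecture matches the paper's proof almost exactly: you introduce the auxiliary relation $\sim$ on the set of Cohen generics over $M$ given by $g\sim g'$ iff $M[g]=M[g']$, observe it is a countable Borel equivalence relation, reduce it to $\cong_\WFT$ via a Borel coding of $M[g]$ (using Mostowski rigidity for the backward implication), show $\sim$ is non-smooth, and finish with the Glimm--Effros dichotomy. The paper does all of this too; where it differs is the non-smoothness step, and that is exactly where your argument has a genuine gap.

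Your ergodicity argument is carried out with respect to the uniform product measure $\mu$ on $2^\omega$, but the set $\mathcal{G}$ of Cohen generics over $M$ is $\mu$-\emph{null}, not conull. Indeed, working inside the countable model $M$ one can build dense open sets $U_n\subseteq 2^\omega$ with $\mu(U_n)<2^{-n}$; every real Cohen generic over $M$ lies in the comeager null set $\bigcap_n U_n$, so $\mu(\mathcal{G})=0$. Consequently, a Borel $(M\cap 2^\omega)$-invariant selector defined only on $\mathcal{G}$ places no constraint whatsoever on $\mu$-ergodicity: all of its preimages are subsets of a null set, so ergodicity of the XOR action on $(2^\omega,\mu)$ yields no contradiction, and your conclusion that the selector is $\mu$-a.e.\ constant does not follow. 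The repair is to switch from measure to Baire category, for which the duality runs in your favor: $\mathcal{G}$ is a comeager $G_\delta$ set, the finite-support XOR action (equivalently $E_0$) is \emph{generically} ergodic, and a Borel selector or reduction to equality on the comeager set $\mathcal{G}$ would force some single $\sim$-class to be comeager, contradicting the fact that the classes are countable, hence meager. Alternatively, one can argue as the paper does: $E_0\restriction\mathcal{G}$ is a non-smooth subrelation of the countable Borel equivalence relation $\sim$ (non-smooth because $\mathcal{G}$ is comeager), and a countable Borel equivalence relation containing a non-smooth Borel subrelation cannot itself be smooth. With either repair the rest of your proof goes through as written.
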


\begin{proof}
  Let $X\subset 2^\omega$ be the set of reals of $V$ which are Cohen generic over $M$. Define the equivalence relation $E$ on $X$ by
  \[g_1\mathrel{E}g_2\Leftrightarrow M[g_1] = M[g_2].
  \]
  Since the forcing relation is definable in $M$, and since $g_1\mathrel{E}g_2$ iff $g_1\in M[g_2]$ and $g_2\in M[g_1]$, one can conclude that $E$ is arithmetic as a set of pairs and in particular $E$ is a Borel equivalence relation. (Alternatively, see \cite[Theorem~3.5.1]{grigorieff}.) In fact $E$ is a countable Borel equivalence relation, meaning each of its equivalence classes is countable.



  We first show that $E_0\leq_B E$. For this, if $g_1,g_2\in X$ and $g_1\mathrel{E}_0g_2$, then $g_1$ and $g_2$ are definable from one another and it follows that $M[g_1]=M[g_2]$. This implies that the restriction $E_0\restriction X$ is a subrelation of $E$. Using some basic facts about $E_0$ and countable Borel equivalence relations (note that $X$ is comeager and see \cite[Propositions~6.1.9,~6.1.10]{gao}), we can conclude that $E$ is not smooth. It then follows from the Glimm--Effros dichotomy that $E_0\leq_B E$.
  
  Next we show that $E\leq_B\mathord{\cong}_{WFT}$. Consider the map $g\mapsto x$ carrying a Cohen generic real $g$ over $M$ to a code $x\in2^{\omega\times\omega}$ for $M[g]$. The mapping is Borel; here we use a code for $M$ as a parameter, together with the definability of the forcing relation. Clearly we have $g\mathrel{E}g'\implies M[g]\cong M[g']$; conversely if $M[g]\cong M[g']$, then since the structures are transitive, we have $M[g]=M[g']$ and so $g\mathrel{E}g'$. Thus we have shown $E\leq_B\mathord{\cong}_{WFT}$.
  
  Putting the results of the last two paragraphs together, we conclude that $E_0\leq_B\mathord{\cong}_{WFT}$.
\end{proof}

We now turn to the study of a second theory $T$. In this case $\cong_\WFT$ will be compared with the equivalence relation $E_{\omega_1}$. The relation $E_{\omega_1}$ is equivalence of codes for countable ordinals, that is, the isomorphism equivalence relation on the set of countable well-ordered relations. The domains of both $\cong_\WFT$ and $E_{\omega_1}$ are non-Borel sets, so we shall need to compare them with respect to absolutely $\bm{\Delta}^1_2$ reduction functions. Here a function is \emph{absolutely~$\bm{\Delta}^1_2$} if it possesses $\bm{\Sigma}^1_2$ and $\bm{\Pi}^1_2$ definitions which are equivalent in all forcing extensions.

\begin{thm}
  Assume $0^\sharp$ exists, and let $T=\Th(L_{\omega_1})$. Then $T$ is a completion of \ZFC, and there exists an absolutely $\bm{\Delta}^1_2$ reduction from $E_{\omega_1}$ to $\cong_\WFT$.
\end{thm}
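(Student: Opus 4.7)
First I verify that $T = \Th(L_{\omega_1})$ is a completion of \ZFC. Since $0^\sharp$ exists, every uncountable cardinal of $V$ is inaccessible in $L$, and in particular $L_{\omega_1^V} \models \ZFC$; being the complete theory of a specific structure, $T$ is then a completion of \ZFC.

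For the reduction, let $I$ denote the closed unbounded proper class of Silver indiscernibles of $L$, and let $\iota$ be its increasing enumeration. Since $\omega_1^V \in I$ and $\iota$ is continuous, $\iota(\omega_1^V) = \omega_1^V$, so $\iota$ restricts to a bijection from $\omega_1^V$ onto $I \cap \omega_1^V$. By Silver indiscernibility, the $L$-definable property ``$L_x \models \varphi$'' transfers between any two Silver indiscernibles, so $L_{\iota(\alpha)} \equiv L_{\omega_1^V}$ for every $\alpha < \omega_1^V$. Hence each $L_{\iota(\alpha)}$ is a transitive (in particular well-founded) countable model of $T$.

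I then define the reduction $f$ as follows. Given a code $w$ for a countable ordinal of type $\alpha = \operatorname{otp}(w)$, let $f(w)$ be a canonical code in $2^{\omega\times\omega}$ for $L_{\iota(\alpha)}$. If $\operatorname{otp}(w) = \operatorname{otp}(w')$ then $f(w)$ and $f(w')$ code the same structure and are isomorphic. Conversely, if $f(w) \cong f(w')$, then since these codes represent transitive sets, the sets themselves must be equal, which gives $\iota(\operatorname{otp}(w)) = \iota(\operatorname{otp}(w'))$ and hence $\operatorname{otp}(w) = \operatorname{otp}(w')$. This shows $f$ reduces $E_{\omega_1}$ to $\cong_\WFT$.

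For the complexity bound, I use that $0^\sharp$ is a $\bm{\Pi}^1_2$ singleton whose defining condition is absolute under forcing: if $0^\sharp$ exists in $V$ then the same real is $0^\sharp$ in every forcing extension, and the class of Silver indiscernibles is preserved---only the countability of particular indiscernibles may change. Using $0^\sharp$ as a parameter, both the function $\alpha \mapsto \iota(\alpha)$ on countable ordinals and the subsequent coding of $L_{\iota(\alpha)}$ are computable by transfinite recursions that are $\bm{\Delta}^1_1(0^\sharp)$ definable. Combining these with the $\bm{\Pi}^1_2$ definition of $0^\sharp$ itself yields $\bm{\Sigma}^1_2$ and $\bm{\Pi}^1_2$ formulas for $f$ that agree in all forcing extensions, as required. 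The main technical obstacle is verifying this last complexity bookkeeping carefully; the conceptual content of the reduction is clean once the Silver indiscernible machinery is in hand.
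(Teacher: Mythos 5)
Your route is genuinely different from the paper's, and its conceptual core is sound. The paper never mentions Silver indiscernibles: it invokes Martin's theorem that $0^\sharp$ implies determinacy for Boolean combinations of lightface analytic sets, sets up a game in which Player~II must produce a code for the least $\beta$ at or above Player~I's ordinal with $L_\beta\models T$, rules out a winning strategy for Player~I via $\bm{\Sigma}^1_1$-boundedness together with the unboundedness of $\{\alpha<\omega_1 : L_\alpha\models T\}$, and then reads the reduction off Player~II's strategy. Your construction is more explicit and has a real advantage: the assignment $\alpha\mapsto L_{\iota(\alpha)}$ is transparently injective on ordinals (distinct indiscernibles give distinct transitive collapses), whereas an assignment of the form ``least $\beta\geq\alpha$ with $L_\beta\models T$'' requires extra care on exactly this injectivity point. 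The trade-off is that the paper's argument uses only soft consequences of $0^\sharp$, while yours leans on the Ehrenfeucht--Mostowski structure theory, and that is where your write-up has a genuine gap.

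The assertion that $\alpha\mapsto\iota(\alpha)$ and the coding of $L_{\iota(\alpha)}$ are ``computable by transfinite recursions that are $\bm{\Delta}^1_1(0^\sharp)$ definable'' is the crux of the theorem, not bookkeeping, and no recursion on codes for $\alpha$ produces this directly: one must explain how, from $w\in\WO$ and the real $0^\sharp$, one actually builds a code for a model isomorphic to $L_{\iota(\mathrm{otp}(w))}$. The mechanism that makes this true is the term-model construction: $0^\sharp$ is a remarkable well-founded EM blueprint, so from $0^\sharp$ and $w$ one can construct, arithmetically in $0^\sharp\oplus w$, the term model of the blueprint over the linear order $w$; it is well-founded because $w$ is a well-order, and by remarkability its transitive collapse is the Skolem hull in $L$ of the first $\mathrm{otp}(w)$ Silver indiscernibles. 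Even granting this, your identification of that hull with $L_{\iota(\alpha)}$ is correct only when $\alpha$ is a limit ordinal; for successor $\alpha$ the hull collapses to some $L_\gamma$ with $\iota(\alpha-1)<\gamma<\iota(\alpha)$ and $\gamma\notin I$, so ``the canonical code for $L_{\iota(\alpha)}$'' is not what the construction yields. This does not break the proof --- the collapse heights are still strictly increasing in $\mathrm{otp}(w)$, so the term-model map is itself a reduction, or one can simply send $\alpha$ to the term model over $\omega\cdot(1+\alpha)$ --- but it must be addressed. Once repaired, the function is Borel in the parameter $0^\sharp$, hence absolutely $\bm{\Delta}^1_2$; note that since the paper's notion is boldface and permits real parameters, you do not even need to fold in the lightface $\bm{\Pi}^1_2$ definition of $0^\sharp$ as your last step attempts. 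As written, however, the definability step --- the only part of the statement that goes beyond the bare existence of a reduction --- is asserted rather than proved.
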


\begin{proof}
  By \cite[Corollary~18.3]{jech}, the existence of $0^\sharp$ implies that $\omega_1^V$ is inaccessible in $L$. It follows that $T$ is a completion of \ZFC.
  
  For the reduction we first show that there is a continuous mapping $g$ with the property that if $x$ is a code for a countable ordinal $\alpha$, then $g(x)$ is a code for a countable ordinal $\beta$ such that $\alpha\leq\beta$ and $L_\beta\models T$. In order to do so, let $G$ be the game in which Players~I and~II alternate playing digits to construct $x,y\in 2^\omega$. Player~II wins if either $x\notin\WO$, or; $x,y$ are codes for ordinals $\alpha,\beta$, $\alpha\leq\beta$, and $L_\beta\models T$. We claim that Player~II has a winning strategy for $G$. Admitting this claim, we let $g$ be the continuous mapping which takes a play $x$ of Player~I to the corresponding play $y$ of Player~II according to the strategy.
  
   To establish that Player~II has a winning strategy, first observe that the winning condition for $G$ is a Boolean combination of lightface analytic sets. It follows from the existence of $0^\sharp$ together with a result of Martin \cite[Theorem~31.4]{kanamori} that $G$ is determined. Hence it is enough to show that Player~I does not have a winning strategy for $G$. To see this, first note that by a simple reflection argument there are unboundedly many $\beta<\omega_1$ such that $L_\beta\models T$. Now suppose Player~I does have a winning strategy for $G$ and let $S\subset\WO$ be the set of all reals $x$ constructed according to the strategy. Then $S$ is a $\bm{\Sigma}_1^1$ subset of $\WO$ and so the boundedness theorem \cite[Theorem~31.2]{kechris} implies $S$ is bounded in $\omega_1$. This is a contradiction, since Player~II can now defeat the strategy by playing a code $y$ for some $\beta$ above $S$ such that $L_\beta\models T$.

 
  Next we will show that there exists an absolutely $\bm{\Delta}^1_2$ function $f$ such that if $x$ is a code for an ordinal $\alpha$, then $f(x)$ is a code for $L_\beta$, where $\beta$ is the $\alpha$th ordinal such that $L_\beta\models T$. It is clear that such a function $f$ serves as the desired reduction.
  
  In order to define such an $f$ in an absolutely $\bm{\Delta}^1_2$ way, we will use the infinite time Turing machine model. Briefly, an infinite time Turing machine is an extension of the classical Turing machine, with finitely many states, and tapes for input, output, scratch, and an oracle. At stage $\omega$ the machine is not considered to have diverged but continues running. In fact at any limit stage, the machine is put in a special limit state, the tape pointers are reset to the left, and the tape cells are replaced with the limit superior of their values so far. We refer the reader to \cite{hamkins-ittm} for other background on infinite time Turing computation. By \cite[Theorem~2.6]{coskey-ittm}, any function which may be computed by an oracle infinite time Turing machine is absolutely $\bm{\Delta}^1_2$.
  
  Let $M$ be an infinite time Turing machine which runs as follows. Let $x$ be a given input and assume $x$ is a code for an ordinal $\alpha$. The machine $M$ will recursively construct for each $i\leq\alpha$ a code $y_i$ for an ordinal $\beta_i$. If the $y_i$ have been constructed for $i<j$, construct a code $z$ for $\sup_{i<j}\beta_i$ and evaluate $g(z)$ ($M$ can evaluate a continuous function by \cite[Theorem~2.1]{coskey-ittm}). For each ordinal $\beta$ between $\sup_{i<j}\beta_i$ and the value of $g(z)$, $M$ constructs a code for $L_\beta$ ($M$ can construct such a code by \cite[Theorem~7]{hamkins-itmt}). Furthermore $M$ checks whether $L_\beta\models T$ ($M$ can evaluate arithmetic expressions by \cite[Theorem~2.1]{hamkins-ittm}). By the construction of $g$, the answer is guaranteed to be Yes for some $\beta$, and the first time this happens we let $y_j=$ the code for that $\beta$. When the final code $y_\alpha$ for $\beta_\alpha$ has been calculated, $M$ outputs a code for $L_{\beta_\alpha}$. The construction guarantees that the output is a code for $L_\beta$ where $\beta$ is the $\alpha$th ordinal such that $L_\beta\models T$, as desired.
\end{proof}

The next result uses the above lower bound to provide a further consequence for the complexity of the classification of well-founded models of $T$.

\begin{thm}
  Assume $0^\sharp$ exists, and let $T=\Th(L_{\omega_1})$. Then $\cong_{\WFT}$ is not absolutely $\bm{\Delta}^1_2$ reducible to any Borel equivalence relation $E$.
\end{thm}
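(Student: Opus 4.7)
The plan is to assume for contradiction that $f\colon\WFT\to Y$ is an absolutely $\bm{\Delta}^1_2$ reduction to some Borel equivalence relation $E$, and then derive a contradiction via a Cohen-forcing extension together with Mansfield's perfect set theorem. The first step is to reduce to the case of equality. Since $E$ is Borel, one can compose $f$ with a Borel procedure that sends each $y\in Y$ to a code for $[y]_E$, yielding an absolutely $\bm{\Delta}^1_2$ function $c\colon\WFT\to 2^\omega$ with $c(M)=c(M')$ iff $M\cong M'$. The image $c(\WFT)$ is then a $\bm{\Sigma}^1_2$ subset of $2^\omega$ of cardinality $\aleph_1^V$, one element per isomorphism class in $\cong_\WFT$ (there being $\aleph_1^V$ such classes by the reflection argument for $C=\lbrace\alpha<\omega_1^V:L_\alpha\models T\rbrace$ used in the preceding proof).

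Next I would pass to a generic extension $V[G]$ obtained by adding $\aleph_2^V$ many Cohen reals. This forcing preserves $\omega_1$, so $C^{V[G]}=C^V$ and $\cong_\WFT^{V[G]}$ still has exactly $\aleph_1^V=\aleph_1^{V[G]}$ isomorphism classes, while $(2^{\aleph_0})^{V[G]}\geq\aleph_2^V>\aleph_1^V$. Because the reduction property of $c$ can be written as a $\bm{\Pi}^1_2$ statement in parameters from $V$ (namely invariance and injectivity of $c$ on $\cong$-classes, using that $\cong$ is $\bm{\Sigma}^1_1$ and the graph of $c$ is $\bm{\Delta}^1_2$), the Shoenfield--Levy absoluteness theorem guarantees that $c$ is still a reduction in $V[G]$. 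Hence in $V[G]$ the set $c(\WFT^{V[G]})$ is a $\bm{\Sigma}^1_2$ subset of $2^\omega$ of cardinality exactly $\aleph_1^V$.

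Finally, the existence of $0^\sharp$ is preserved by set forcing, so Mansfield's perfect set theorem applies in $V[G]$: every uncountable $\bm{\Sigma}^1_2$ subset of $2^\omega$ contains a perfect subset, which must then have cardinality $(2^{\aleph_0})^{V[G]}$. The set $c(\WFT^{V[G]})$ is uncountable yet of cardinality $\aleph_1^V<(2^{\aleph_0})^{V[G]}$, so it cannot contain such a perfect subset---contradiction.

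The step I expect to be the main obstacle is ensuring that the version of Mansfield's theorem available from $0^\sharp$ alone applies to the $\bm{\Sigma}^1_2$ set defining $c(\WFT^{V[G]})$. The classical statement of Mansfield gives the perfect set dichotomy for $\bm{\Sigma}^1_2(r)$ sets under the assumption that $r^\sharp$ exists, and if the definition of $c$ (which inherits the parameters of $f$) involves real parameters not lying in $L$, one may need to invoke sharps for those parameters. This can typically be arranged either by absorbing the parameters into an enlarged forcing extension in which the requisite sharps are available, or by iterating the forcing-and-Mansfield step through an auxiliary tower; pinning down the minimal hypotheses here is the technical heart of the argument.
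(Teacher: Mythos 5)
There is a genuine gap, and it sits at the very first step, on which everything else depends. You claim that composing $f$ with a Borel map $y\mapsto\text{(code for }[y]_E)$ yields $c$ with $c(M)=c(M')$ iff $M\cong M'$. This conflates equality of Borel \emph{codes} (as reals) with equality of the Borel \emph{sets} they code. The natural map producing a code for the section $[y]_E$ depends on the point $y$: if $M\cong M'$ but $f(M)\neq f(M')$, the two resulting codes are codes \emph{for the same set} but are in general \emph{different reals}, so $c$ is not constant on isomorphism classes; conversely, "coding the same set" is itself a non-smooth $\bm{\Pi}^1_1$ equivalence relation on codes, and there is no definable selector picking one canonical code per Borel set (or per $E$-class). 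Indeed, the existence of your $c$ would amount to an absolutely $\bm{\Delta}^1_2$ reduction of $E$ (restricted to the image of $f$) to equality of reals --- i.e., that every Borel equivalence relation is "definably smooth" --- which is false (think of $E=E_0$). Once this step is removed, what survives is only that $f(\WFT^{V[G]})$ meets exactly $\aleph_1^V$ many $E$-classes; but that set of reals may perfectly well have cardinality continuum, so Mansfield's theorem yields no contradiction. Nor can any counting-of-classes argument be rescued: the restricted relation is only $\bm{\Sigma}^1_2$, and thin $\bm{\Sigma}^1_2$ (even $\bm{\Sigma}^1_1$) equivalence relations with exactly $\aleph_1$ classes do exist --- $E_{\omega_1}$, and $\cong_\WFT$ itself, are examples --- so "uncountably many classes but fewer than continuum" is not contradictory at this pointclass level.

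Two further remarks. First, the contradiction must exploit not just that $E$ is an equivalence relation of some definability class but that its classes are Borel sets of \emph{bounded rank}; this is exactly how the paper proceeds: it composes the hypothetical reduction with the previous theorem's absolutely $\bm{\Delta}^1_2$ reduction $E_{\omega_1}\leq\;\cong_\WFT$, passes to the induced assignment of the Borel set $[f(x)]_E$ (as a set, not a code) to each countable ordinal, and then quotes Hjorth's result that no absolutely $\bm{\Delta}^1_2$ injection from codes for ordinals into codes for Borel sets of bounded rank can exist. Your proposal is, in effect, an attempt to reprove Hjorth's result from scratch, and the code/set conflation is precisely the difficulty it hides. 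Second, your closing suggestion for handling parameters --- "absorbing the parameters into an enlarged forcing extension in which the requisite sharps are available" --- cannot work as stated: set forcing never creates sharps, so if a real parameter of $f$ lacks a sharp, no extension will supply one; with only $0^\sharp$ assumed, boldface applications of Mansfield's theorem are genuinely unavailable, and this would be a second obstruction even if the main gap were repaired.
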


\begin{proof}
  By the previous theorem it is sufficient to show that there is no absolutely $\bm{\Delta}^1_2$ reduction from $E_{\omega_1}$ to a Borel equivalence relation. Indeed, if there were such a reduction $f$, then it would be possible to find an absolutely $\bm{\Delta}^1_2$ injection $F$ from codes for ordinals to codes for sets of reals of bounded Borel rank. (In fact one can take $F(x)$ to be a code for $[f(x)]_E$.) However, this contradicts the remark in the last paragraph of Section~3 of \cite{hjorth}, which states that no such mapping exists.
\end{proof}

\bibliographystyle{alpha}
\bibliography{bib}

\begin{thebibliography}{HMSW08}

\bibitem[CH11]{coskey-ittm}
Samuel Coskey and Joel~David Hamkins.
\newblock Infinite time decidable equivalence relation theory.
\newblock {\em Notre Dame J. Form. Log.}, 52(2):203--228, 2011.

\bibitem[CK10]{coskey-kossak}
Samuel Coskey and Roman Kossak.
\newblock The complexity of classification problems for models of arithmetic.
\newblock {\em Bull. Symbolic Logic}, 16(3):345--358, 2010.

\bibitem[Dwo17]{sam-thesis}
Samuel Dworetzky.
\newblock The classification problem for models of {ZFC}.
\newblock Master's thesis, Boise State University, Boise, ID, 2017.

\bibitem[Ena02]{enayat-counting}
Ali Enayat.
\newblock Counting models of set theory.
\newblock {\em Fund. Math.}, 174(1):23--47, 2002.

\bibitem[FS89]{friedman}
Harvey Friedman and Lee Stanley.
\newblock A {B}orel reducibility theory for classes of countable structures.
\newblock {\em J. Symbolic Logic}, 54(3):894--914, 1989.

\bibitem[Gai76]{gaifman}
Haim Gaifman.
\newblock Models and types of {P}eano's arithmetic.
\newblock {\em Ann. Math. Logic}, 9(3):223--306, 1976.

\bibitem[Gao09]{gao}
Su~Gao.
\newblock {\em Invariant descriptive set theory}, volume 293 of {\em Pure and
  Applied Mathematics (Boca Raton)}.
\newblock CRC Press, Boca Raton, FL, 2009.

\bibitem[Gri75]{grigorieff}
Serge Grigorieff.
\newblock Intermediate submodels and generic extensions in set theory.
\newblock {\em Ann. Math. (2)}, 101:447--490, 1975.

\bibitem[Hjo98]{hjorth}
Greg Hjorth.
\newblock An absoluteness principle for {B}orel sets.
\newblock {\em J. Symbolic Logic}, 63(2):663--693, 1998.

\bibitem[HKL90]{hkl}
L.~A. Harrington, A.~S. Kechris, and A.~Louveau.
\newblock A {G}limm-{E}ffros dichotomy for {B}orel equivalence relations.
\newblock {\em J. Amer. Math. Soc.}, 3(4):903--928, 1990.

\bibitem[HL00]{hamkins-ittm}
Joel~David Hamkins and Andy Lewis.
\newblock Infinite time {T}uring machines.
\newblock {\em J. Symbolic Logic}, 65(2):567--604, 2000.

\bibitem[HMSW08]{hamkins-itmt}
Joel~David Hamkins, Russell Miller, Daniel Seabold, and Steve Warner.
\newblock Infinite time computable model theory.
\newblock In {\em New computational paradigms}, pages 521--557. Springer, New
  York, 2008.

\bibitem[Jec03]{jech}
Thomas Jech.
\newblock {\em Set theory}.
\newblock Springer Monographs in Mathematics. Springer-Verlag, Berlin, 2003.
\newblock The third millennium edition, revised and expanded.

\bibitem[Kan03]{kanamori}
Akihiro Kanamori.
\newblock {\em The higher infinite}.
\newblock Springer Monographs in Mathematics. Springer-Verlag, Berlin, second
  edition, 2003.
\newblock Large cardinals in set theory from their beginnings.

\bibitem[Kec95]{kechris}
Alexander~S. Kechris.
\newblock {\em Classical descriptive set theory}, volume 156 of {\em Graduate
  Texts in Mathematics}.
\newblock Springer-Verlag, New York, 1995.

\bibitem[KS06]{kossak-schmerl}
Roman Kossak and James~H. Schmerl.
\newblock {\em The structure of models of {P}eano arithmetic}, volume~50 of
  {\em Oxford Logic Guides}.
\newblock The Clarendon Press, Oxford University Press, Oxford, 2006.
\newblock Oxford Science Publications.

\bibitem[Mar02]{Marker}
David Marker.
\newblock {\em Model theory}, volume 217 of {\em Graduate Texts in
  Mathematics}.
\newblock Springer-Verlag, New York, 2002.
\newblock An introduction.

\bibitem[Won14]{wong}
Tin~Lok Wong.
\newblock Model theory of arithmetic, 2014.
\newblock Online course notes located at
  http://www.math.nus.edu.sg/$\sim$matwong/teach/modelarith/.

\end{thebibliography}

\end{document}